\documentclass[11pt]{article}

\usepackage[T1]{fontenc}
\usepackage[utf8]{inputenc}
\usepackage{lmodern}
\usepackage{microtype}
\usepackage[a4paper,margin=30mm]{geometry}
\usepackage{amsmath,amssymb,amsthm,mathtools}
\usepackage{aliascnt}
\usepackage{xcolor}
\usepackage{hyperref}
\usepackage[nameinlink,capitalise,noabbrev]{cleveref}
\usepackage{fancyhdr}

\hypersetup{
  colorlinks=true,
  linkcolor=blue!45!black,
  citecolor=green!35!black,
  urlcolor=blue!55!black,
  pdftitle={Conditional Uniformization of Kahler Surfaces},
  pdfsubject={Topology and strong Steinness of complete noncompact Kahler surfaces},
  pdfkeywords={Kahler surface, contractibility, strong Steinness, simple connectivity at infinity, radial metric, holomorphic kernel, cohomology}
}

\allowdisplaybreaks[1]
\newcommand{\statementspace}[1][18]{%
  \par\penalty-100
  \ifdim\dimexpr\pagegoal-\pagetotal\relax<#1\baselineskip
    \newpage
  \fi
}

\newtheorem{theorem}{Theorem}[section]
\newaliascnt{lemma}{theorem}
\newtheorem{lemma}[lemma]{Lemma}
\aliascntresetthe{lemma}
\newaliascnt{corollary}{theorem}
\newtheorem{corollary}[corollary]{Corollary}
\aliascntresetthe{corollary}
\newaliascnt{proposition}{theorem}
\newtheorem{proposition}[proposition]{Proposition}
\aliascntresetthe{proposition}
\theoremstyle{definition}
\newaliascnt{definition}{theorem}
\newtheorem{definition}[definition]{Definition}
\aliascntresetthe{definition}
\theoremstyle{remark}
\newaliascnt{remark}{theorem}

\aliascntresetthe{remark}

\newcommand{\R}{\mathbb R}
\newcommand{\C}{\mathbb C}
\newcommand{\Z}{\mathbb Z}
\newcommand{\BK}{\operatorname{BK}}
\newcommand{\Ric}{\operatorname{Ric}}
\newcommand{\NQOBC}{\mathrm{NQOBC}}
\newcommand{\Hom}{\operatorname{Hom}}

\newcommand{\SCI}{\mathrm{SCI}}
\newcommand{\Dcal}{\mathcal D}
\newcommand{\Hcal}{\mathcal H}
\newcommand{\Pcal}{\mathcal P}
\newcommand{\OO}{\mathcal O}
\newcommand{\ddc}{\sqrt{-1}\,\partial\bar\partial}

\title{\bfseries Conditional Uniformization of K\"ahler Surfaces}
\author{Jingcao Wu}
\date{}

\begin{document}
\maketitle

\begin{abstract}
We prove that a complete noncompact K\"ahler surface with nonnegative Ricci and nonnegative quadratic orthogonal bisectional curvature is contractible, and hence homeomorphic to $\R^4$, if it is simply connected at infinity. Under positive bisectional curvature, this removes the contractibility assumption from the conditional uniformization theorem of Datar--Pingali--Seshadri: strong Steinness and simple connectivity at infinity suffice to identify the surface biholomorphically with $\C^2$.
We also derive bounded-gradient strictly plurisubharmonic exhaustions and uniform holomorphic kernel estimates for complete $U(n)$-invariant K\"ahler metrics on $\C^n$ with nonnegative bisectional curvature.
\end{abstract}

\medskip
\noindent\textbf{2020 Mathematics Subject Classification.}
Primary 32Q15; Secondary 32E10, 32A25, 55N20.

\noindent\textbf{Keywords.}
K\"ahler surface; contractibility; simple connectivity at infinity.

\begingroup
\small
\setlength{\parskip}{0pt}
\tableofcontents
\endgroup

\section{Introduction}\label{sec:introduction}

Yau's uniformization conjecture \cite{Yau94} asks whether a complete noncompact K\"ahler manifold with positive holomorphic bisectional curvature is biholomorphic to complex Euclidean space. For surfaces, Datar--Pingali--Seshadri \cite[Theorem~2]{DPS26} prove this conclusion under three additional hypotheses: strong Steinness, contractibility,
and simple connectivity at infinity. Our main topological theorem holds under weaker curvature assumptions and makes contractibility redundant when simple connectivity at infinity is assumed.

Throughout, manifolds are connected, smooth, second countable, and without boundary. We write $(X,g)$ for a K\"ahler manifold, denote its complex structure by $J$, and reserve $\omega$ for the associated K\"ahler form, defined by $\omega(V,W)=g(JV,W)$. We write $\BK\geq0$ and $\BK>0$ for nonnegative and positive holomorphic bisectional curvature; no upper curvature bound is implicit. Simple connectivity at infinity (SCI), strong Steinness, and our metric conventions are recalled in \cref{sec:definitions}. For an abelian group $A$, set
\[
 H^q_\infty(X;A)=\varinjlim_{K\Subset X}H^q(X\setminus K;A).
\]

\begin{theorem}\label{thm:main-topology}
Let $(X,g)$ be a complete noncompact K\"ahler surface with $\Ric\geq0$ and $\NQOBC\geq0$. If $X$ is $\SCI$, then $X$ is contractible and has one end.
\end{theorem}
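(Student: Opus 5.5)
We combine known vanishing results for complete Kähler manifolds with $\Ric\geq0$ and $\NQOBC\geq0$ with the topology of open $4$-manifolds. The target is to show that $X$ is simply connected and has $H_2(X;\Z)=0$ and $H_3(X;\Z)=0$. A simply connected open $4$-manifold with vanishing reduced homology is contractible, by Hurewicz and Whitehead.

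\textbf{Step 1: one end.} A complete manifold with $\Ric\geq0$ and at least two ends contains a line, by the standard ray construction. The Cheeger--Gromoll splitting theorem, in its Kähler version, then gives $X=\C\times\Sigma$ isometrically and holomorphically, with $\Sigma$ a Riemann surface. Since $X$ has two ends, $\Sigma$ is compact. Then $X\setminus K$ retracts onto $\Sigma\times\{|z|>R\}$, and the fundamental group of each end is $\pi_1(\Sigma)\times\Z$. This group is never trivial, which contradicts $\SCI$. Hence $X$ has one end.

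\textbf{Step 2: fundamental group.} $\SCI$ implies in particular that the end is simply connected. Let $\tilde X\to X$ be the universal cover; it inherits the curvature hypotheses. If $\pi_1(X)$ is infinite, $\tilde X$ has a line by the standard argument and splits off a factor $\C$. If $\pi_1(X)$ is finite and nontrivial, the single simply connected end lifts to $|\pi_1(X)|$ distinct ends of $\tilde X$, since each neighbourhood of infinity lifts trivially. That contradicts the one-end property applied to $\tilde X$. The infinite case is handled as in Step 1: a quotient of $\C\times\Sigma$ with $\Sigma$ compact by a group acting freely still has an end with nontrivial $\pi_1$, which again contradicts $\SCI$. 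Thus $\pi_1(X)=1$.

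\textbf{Step 3: homology.} Since $X$ is open, $H_4(X)=0$. Poincaré duality for noncompact manifolds and the long exact sequence of the pair give
\[
H^1_c(X)\to H^1(X)\to H^1_\infty(X)\to H^2_c(X)\to H^2(X)\to H^2_\infty(X)\to H^3_c(X)\to H^3(X).
\]
$\SCI$ gives $H^1_\infty(X;A)=0$ for all $A$. We have $H_1(X)=0$, and $H^3_c(X)\cong H_1(X)=0$. Also $H^3(X)\cong H_c^1(X)^{\vee}$-type terms vanish, because $H^1_c(X)\cong H_3(X)$ and one end together with $H^0$ forces $H^1_c=0$. Hence $H_3(X)=0$.

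The crux is $H_2(X)=0$, equivalently $H^2_c(X)\to H^2(X)$ is zero and $H^2(X)=0$. Here the curvature enters. Under $\Ric\geq0$ and $\NQOBC\geq0$, Bochner-type arguments (Ni--Niu type vanishing for $L^2$ harmonic $(1,1)$-forms) show that $L^2$ harmonic $2$-forms vanish. By Hodge--de Rham theory, the image of $H^2_c(X;\R)\to H^2(X;\R)$ injects into $L^2$ harmonic $2$-forms, so this image is zero. Then $H^2(X;\R)\hookrightarrow H^2_\infty(X;\R)$. $\SCI$ makes the end's boundary a homology $3$-sphere in the limit, so $H^2_\infty=H^1_\infty$-dual$=0$; hence $H^2(X;\R)=0$. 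For integral coefficients, we note that $H_2(X;\Z)$ is torsion-free because $H_1=0$. Combined with the integral version of the exact sequence and $H^2_\infty(X;\Z)=0$, which follows from SCI and Poincaré duality of the end, this gives $H_2(X;\Z)=0$.

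\textbf{Main obstacle.} I expect the main obstacle to be the vanishing of the image of compactly supported classes, that is, the $L^2$ Bochner step under only $\NQOBC\geq0$ rather than $\BK\geq0$. I would first try the quadratic orthogonal bisectional Bochner formula on anti-self-dual $(1,1)$-forms in dimension two, where it is exactly the relevant curvature term. I would also need care in passing from the direct limit $H^*_\infty$ to the statement that the end is a homology $S^3\times\R$.
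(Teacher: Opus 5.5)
Your outline (one end, then $\pi_1=0$, then Bochner vanishing of the image of $H^2_c\to H^2$, then end-sequence bookkeeping and Hurewicz--Whitehead) parallels the paper's. However, Step~2 has a genuine gap when $\pi_1(X)$ is infinite.

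The ``standard argument'' that produces a line in $\widetilde X$ recentres midpoints of long minimizing geodesics by deck transformations. That requires a compact fundamental domain, i.e.\ compact $X$. For open manifolds the implication fails: Nabonnand's complete metrics with $\Ric>0$ on $S^1\times\R^3$ have infinite fundamental group, and their universal covers contain no line.

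Even granting a line, the de Rham splitting only gives $\widetilde X=\Sigma\times\C$, where $\Sigma$ is a simply connected complete surface of nonnegative curvature. Such a $\Sigma$ may be noncompact, and the deck group need not act through the $\C$-factor alone. So the compact-$\Sigma$ end computation you borrow from Step~1 is unjustified here.

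The paper closes exactly this case with Sormani's dichotomy, in the proof of \cref{thm:sci-simple}. Either every class has the loops-to-infinity property, or a connected double cover splits isometrically as $N\times\R$. Since $\SCI$ makes remote loops null-homotopic in $X$, no nontrivial class has the loops-to-infinity property, so the double cover splits. That cover is $\SCI$ by \cref{prop:sci-covers}. It is simply connected, by translating loops along the $\R$-factor, and hence one-ended by \cref{prop:one-end}. But \cref{prop:sci-covers} gives it at least two ends, a contradiction. This handles finite and infinite $\pi_1$ at once; your finite-case argument is fine.

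Three further problems are repairable.

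\textbf{Step~1.} The splitting is $X=N\times\R$ with $N$ compact, not $\C\times\Sigma$; for compact $\Sigma$ the latter has only one end. The correct conclusion runs as follows. $JV$ restricts to a nonzero parallel field on $N$, and its dual $1$-form is harmonic, hence not exact on compact $N$. So $\pi_1(N)\neq0$, and the ends $N\times(a,\infty)$ violate $\SCI$.

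\textbf{Step~3.} $\SCI$ does not force $H^2_\infty(X;\R)=0$. For example, $\R^4$ connect-summed with infinitely many copies of $S^2\times S^2$ escaping to infinity is $\SCI$, yet its $H^2_\infty(\,\cdot\,;\R)\cong\prod\R^2/\bigoplus\R^2\neq0$. You do not need this claim. Once the image of $H^2_c(X;\R)\to H^2(X;\R)$ vanishes, exactness of $H^1_\infty\to H^2_c\to H^2$ with $H^1_\infty=0$ gives $H^2_c(X;\R)=0$, hence $H_2(X;\R)=0$. Torsion-freeness of $H_2(X;\Z)$ then yields $H_2(X;\Z)=0$ with no input from $H^2_\infty(X;\Z)$. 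The paper instead uses $H^2_c(X;\Z)\hookrightarrow\Hom(H_2(X;\Z),\Z)\hookrightarrow\Hom(H_2(X;\Z),\R)=0$.

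\textbf{Bochner step.} It must cover all of a harmonic $2$-form, not only its anti-self-dual part. The $(2,0)$- and $(0,2)$-components are $L^2$ holomorphic and antiholomorphic $2$-forms. As in \cref{lem:weak-bochner}, they are controlled by $\Ric\geq0$, not by $\NQOBC\geq0$.
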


Here $\NQOBC\geq0$ denotes nonnegative quadratic orthogonal bisectional curvature; see \cref{def:nqobc}. This condition is strictly weaker than $\BK\geq0$, even in the presence of $\Ric\geq0$. Ni--Zheng \cite[Theorem~7.1]{NZ18} construct complete K\"ahler metrics
on $\C^2$ with positive Ricci and positive orthogonal bisectional curvature, but with negative holomorphic sectional curvature somewhere. The proof of \cref{thm:main-topology} combines the following fundamental-group theorem with a Bochner argument.

\begin{theorem}\label{thm:sci-simple}
Let $(X,g)$ be a complete noncompact K\"ahler manifold of any complex dimension with $\Ric\geq0$. If $X$ is $\SCI$, then $\pi_1(X)=0$ and $X$ has one end.
\end{theorem}

We deduce \cref{thm:sci-simple} from Sormani's loops-to-infinity and split-double-cover results \cite[Theorem~11 and Corollary~20]{Sor01}. The K\"ahler structure and Cheeger--Gromoll splitting \cite{CG71} imply that a simply connected complete noncompact K\"ahler manifold with $\Ric\geq0$ has one end, whereas a connected finite cover of an SCI manifold inherits SCI and has at least as many ends as its degree. These facts exclude the exceptional double cover. The simple-connectivity conclusion for $\BK>0$ was also noted by Chau--Tam \cite[proof of Theorem~1.1]{CT08}.

The next theorem replaces SCI by the weaker end-cohomology hypothesis needed for contractibility under $\BK\geq0$, while retaining simple connectivity.

\begin{theorem}\label{thm:refined-contractibility}
Let $(X,g)$ be a complete noncompact K\"ahler surface with $\BK\geq0$, $\pi_1(X)=0$, and $H^1_\infty(X;\R)=0$. Then $X$ is contractible.
\end{theorem}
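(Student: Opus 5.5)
Since $X$ is a simply connected noncompact $4$-manifold, it is homotopy equivalent to a CW complex of dimension at most $3$. Hence $H_3(X;\Z)$ is free and $H_4(X;\Z)=0$. By Hurewicz and Whitehead, $X$ is contractible as soon as $H_2(X;\Z)=H_3(X;\Z)=0$. Poincar\'e duality for the open oriented manifold $X$ gives $H_2(X;\Z)\cong H^2_c(X;\Z)$ and $H_3(X;\Z)\cong H^1_c(X;\Z)$. So the plan is to show that the compactly supported cohomology vanishes in degrees $1$ and $2$. The tool is the long exact sequence
\[
 \cdots\to H^{q}_c(X;A)\to H^q(X;A)\to H^q_\infty(X;A)\to H^{q+1}_c(X;A)\to\cdots
\]
together with an $L^2$-Bochner argument for the only geometric input.

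\emph{Degree $1$.} $H^0_c=0$, and $H^0(X)\to H^0_\infty(X)$ is surjective once $X$ has one end. One end follows from $\pi_1(X)=0$ and $\Ric\ge0$ by the Cheeger--Gromoll/K\"ahler argument quoted after \cref{thm:sci-simple}. Since $H^1(X;\Z)=\Hom(H_1,\Z)=0$, this gives $H^1_c(X;\Z)=0$, hence $H_3(X;\Z)=0$.

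\emph{Degree $2$, rationally.} The hypothesis $H^1_\infty(X;\R)=0$ makes $H^2_c(X;\R)\to H^2(X;\R)$ injective. It therefore suffices to show this map is zero. I will use the standard fact (Hodge--de~Rham--Kodaira, as in Anderson/Carron) that the image of $H^2_c\to H^2$ injects into the space $\Hcal^2_{L^2}$ of $L^2$ harmonic $2$-forms. So the goal is $\Hcal^2_{L^2}(X)=0$, and on a K\"ahler surface I split
\[
 \Lambda^2=\Lambda^+\oplus\Lambda^-,\qquad \Lambda^+=\R\omega\oplus[\![\Lambda^{2,0}]\!],\qquad \Lambda^-=\text{primitive real }(1,1).
\]
The three pieces are handled as follows:
\begin{itemize}
\item The $\omega$-component of a harmonic form is $f\omega$ with $f$ an $L^2$ harmonic function. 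By Yau, $f$ is constant, and since $\operatorname{vol}(X)=\infty$ under $\Ric\ge0$, $f=0$.
\item The $(2,0)$-component is an $L^2$ holomorphic section of $K_X$. The Bochner formula with $\Ric\ge0$ makes $|s|^2$ subharmonic, and Yau's $L^2$ Liouville theorem forces $s=0$.
\item The anti-self-dual part $\alpha$ satisfies the Weitzenb\"ock formula $\Delta\alpha=\nabla^*\nabla\alpha+\mathcal R(\alpha)$. On primitive $(1,1)$-forms on a K\"ahler surface, the curvature term $\mathcal R$ is the quadratic orthogonal bisectional curvature expression, so it is nonnegative under $\BK\ge0$ (indeed under $\NQOBC\ge0$). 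A cutoff integration by parts for $L^2$ harmonic forms then gives $\nabla\alpha=0$. A parallel form has constant norm, so infinite volume forces $\alpha=0$.
\end{itemize}
Thus $\Hcal^2_{L^2}=0$, so $H^2_c(X;\R)=0$, i.e.\ $H^2_c(X;\Z)$ is torsion.

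\emph{Integral step.} I then rule out torsion using the integral sequence
\[
 0=H^1(X;\Z)\to H^1_\infty(X;\Z)\to H^2_c(X;\Z)\to H^2(X;\Z).
\]
First, $H^2(X;\Z)=\Hom(H_2(X),\Z)\oplus\operatorname{Ext}(H_1(X),\Z)=\Hom(H_2(X),\Z)$ is torsion-free. Second, $H^1_\infty(X;\Z)$ is a direct limit of the groups $H^1(X\setminus K;\Z)=\Hom(H_1(X\setminus K),\Z)$, which are torsion-free, so it is torsion-free. An extension of a torsion-free group by a torsion-free group is torsion-free, so $H^2_c(X;\Z)$ is torsion-free. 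Being also torsion, it vanishes. Hence $H_2(X;\Z)=0$ and $X$ is contractible.

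The main obstacle is the anti-self-dual Bochner step. It requires checking carefully that the Weitzenb\"ock curvature term on $\Lambda^-$ of a K\"ahler surface is exactly the quadratic orthogonal bisectional form (so that $\BK\ge0$ suffices). It also requires justifying, without any curvature upper bound, the cutoff argument and the identification of the image of $H^2_c$ with $L^2$ harmonic representatives on a complete manifold.
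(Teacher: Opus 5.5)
Your proof is correct. Its topological frame is the paper's: one end from the Cheeger--Gromoll/K\"ahler argument, the end sequence, Poincar\'e duality, and Hurewicz--Whitehead. The two middle steps differ.

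\textbf{Geometric input.} The paper's proof of this theorem cites \cite[Proposition~6.1]{DPS26} for $H^2(X;\R)=0$. You instead reprove the vanishing of $L^2$ harmonic $2$-forms. That is essentially the paper's own \cref{lem:weak-bochner}, organized by $\Lambda^+=\R\omega\oplus\operatorname{Re}\Lambda^{2,0}$ and $\Lambda^-$ (the primitive real $(1,1)$-forms) rather than by type. The curvature term you flag is indeed a positive multiple of $\sum_{i,j}R_{i\bar i j\bar j}(\lambda_i-\lambda_j)^2$. For $\lambda_1=-\lambda_2$ this is a positive multiple of $\lambda_1^2R_{1\bar12\bar2}$. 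So your route uses only $\Ric\geq0$ and $\NQOBC\geq0$, and also yields the version needed for \cref{thm:main-topology}.

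\textbf{Integral step.} The paper uses the injections $H^1_\infty(X;\Z)\hookrightarrow H^1_\infty(X;\R)=0$ and $H^2(X;\Z)\hookrightarrow H^2(X;\R)=0$. You use only $H^2_c(X;\R)=0$ together with torsion-freeness of $H^1_\infty(X;\Z)$ and $H^2(X;\Z)$. Your version consumes exactly what the $L^2$ argument produces. It avoids the detour $H^2_c(X;\R)\to H_2(X;\R)\to H^2(X;\R)$, which the paper needs only because DPS state their result for $H^{2n-2}(X;\R)$. The paper's version, in exchange, can treat the analytic input as a black box.

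\textbf{Small repairs.}
First, $|s|^2$ is only in $L^1$. Apply Yau's Liouville theorem to $|s|$ instead, which is subharmonic because $\ddc\log|s|^2=\Ric(\omega)\geq0$ off the zero set. Alternatively, reuse your cutoff argument.
Second, the step ``$H^2_c(X;\R)=0$, i.e.\ $H^2_c(X;\Z)$ is torsion'' needs a line. For example, combine Poincar\'e duality with $H_2(X;\R)\cong H_2(X;\Z)\otimes\R$.
Third, simple connectivity plays no role in the $3$-dimensional CW model, and freeness of $H_3$ is never used.
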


\Cref{thm:refined-contractibility} is a topological consequence of Datar--Pingali--Seshadri's real-cohomology vanishing theorem \cite[Proposition~6.1]{DPS26}. The universal coefficient theorem, end cohomology, and integral Poincar\'e duality yield integral acyclicity; simple connectivity then gives contractibility. For \cref{thm:main-topology}, \cref{lem:weak-bochner} extends the vanishing input to $\Ric\geq0$ and $\NQOBC\geq0$. All three topological theorems are proved in \cref{sec:refined-proof}.

Freedman's recognition theorem \cite[Corollary~1.2]{Fre82} identifies the underlying topological manifold in \cref{thm:main-topology} with $\R^4$; we record this consequence in \cref{cor:euclidean-homeomorphism}. In particular, $\BK\geq0$ and SCI suffice for this conclusion.

A closely related result is due to Chau--Tam \cite[Theorem~1.1]{CT08}. They prove that a complete noncompact K\"ahler $n$-manifold with bounded positive bisectional curvature and SCI is biholomorphic to a pseudoconvex domain in $\C^n$ homeomorphic to $\R^{2n}$, provided its scalar curvature satisfies the uniform average bound
\[
\frac{1}{\operatorname{Vol}_g B_g(x,r)}\int_{B_g(x,r)}\operatorname{Scal}_g\,dV_g\leq\frac{C}{1+r}\qquad(x\in X,\ r>0).
\]
In complex dimension two, \cref{cor:euclidean-homeomorphism} gives the Euclidean homeomorphism without either supplementary curvature bound. 

Under weaker curvature assumptions, Lee--Tam \cite[Corollary~1.1]{LT21} prove that a complete noncompact K\"ahler $n$-manifold with nonnegative Ricci curvature, nonnegative orthogonal bisectional curvature, and maximal volume growth is biholomorphic to a pseudoconvex domain in $\C^n$ homeomorphic to $\R^{2n}$. In complex dimension two, their curvature assumptions coincide with those of \cref{thm:main-topology}. In \cref{cor:euclidean-homeomorphism}, SCI replaces maximal volume growth, and the conclusion is topological; Lee--Tam also obtain a holomorphic conclusion. Under $\BK\geq0$ and maximal volume growth, Liu's theorem \cite[Theorem~1.1]{Liu19} gives a biholomorphism with $\C^n$.

Topological and smooth recognition differ in real dimension four. Gompf \cite{Gom98} constructs exotic smoothings of $\R^4$ admitting Stein structures, so contractibility, SCI, and ordinary Steinness do not determine the smooth structure. Whether the curvature hypotheses exclude such smoothings remains open here. Datar--Pingali--Seshadri \cite[Introduction]{DPS26} likewise distinguish the classical diffeomorphism conclusion under positive real sectional curvature from the question under positive bisectional curvature alone.

We next derive quantitative estimates for complete rotationally symmetric metrics from the established radial curvature theory. The underlying complex manifold is $\C^n$; the estimates concern the prescribed metric and its weighted holomorphic function spaces.

\begin{theorem}\label{thm:radial}
Let $n\geq2$ and let $g$ be a complete $U(n)$-invariant K\"ahler metric on $\C^n$ with $\BK\geq0$. There is a constant $\kappa>0$ such that
\[
 g\geq\frac{\kappa}{1+|z|^2}g_{\mathrm{Eucl}}.
\]
The strictly plurisubharmonic exhaustion $\rho=\log(1+|z|^2)$ has bounded $g$-gradient. For $q=n+2$, the anchored weighted space
\[
\Hcal_q=\left\{F\in\OO(\C^n):F(0)=0,\quad\int_{\C^n}|F|^2(1+|z|^2)^{-q}\,dV_g<\infty\right\}
\]
is a finite direct sum of all homogeneous polynomial spaces of degrees $1,\ldots,d_*$, with $1\leq d_*\leq q-1$. Its diagonal kernel is
\[
 K_q(z)=K(|z|^2),\qquad K(s)=\sum_{d=1}^{d_*}c_ds^d,\quad c_d>0.
\]
The function $q^{-1}\log(1+K_q)$ is a smooth strictly plurisubharmonic exhaustion. For a suitable sequence of annuli, this single kernel satisfies annular properness \textup{(P)} and the global relative derivative estimate \textup{(J)} of \cref{sec:radial-estimates}, with $q_j=q$ and $a_j=1$ for every $j$ and a constant independent of the point and the annulus.
\end{theorem}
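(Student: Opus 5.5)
The plan is to use the standard radial description of $U(n)$-invariant K\"ahler metrics. Write $\omega=\ddc\phi(|z|^2)$ with $\phi$ a smooth function of $s=|z|^2$. Following Wu--Zheng, set $h(s)=s\phi'(s)$, which is increasing with $h(0)=0$. Then the eigenvalues of $g$ relative to $g_{\mathrm{Eucl}}$ are $\phi'(s)$ in the $n-1$ tangential directions and $(s\phi')'=h'(s)$ in the radial direction.

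The first step is to import the curvature characterization. In these variables, $\BK\geq0$ is equivalent to two conditions:
\begin{enumerate}
\item $h'>0$ and $h''\leq0$, i.e.\ $h$ is concave in $s$;
\item $\xi(s)=-s h''/h'$ satisfies $0\leq\xi<1$ and is nondecreasing.
\end{enumerate}
Completeness of $g$ is equivalent to $\int_1^\infty\sqrt{h'(s)/s}\,ds=\infty$.

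\textbf{The lower bound $g\geq\kappa g_{\mathrm{Eucl}}/(1+s)$.} For the tangential eigenvalue, concavity with $h(0)=0$ gives $h(s)\geq \min(s,1)h(1)$. Hence $\phi'(s)=h(s)/s\geq h(1)/(1+s)$ up to a constant.

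The radial eigenvalue $h'$ is the main obstacle, since concavity only gives that $h'$ decreases. Here I will use the monotonicity of $\xi$. If $\xi(s)\to\xi_\infty<1$, then $h'(s)\gtrsim s^{-\xi_\infty}$, which is at least $c/(1+s)$ when $\xi_\infty<1$. The borderline case $\xi_\infty=1$ needs care. I expect completeness to force $h'(s)s\to\infty$ or to be bounded below, because $\int\sqrt{h'/s}=\infty$ is incompatible with $h'(s)\leq\epsilon/s$ eventually; such a bound would give $\sqrt{h'/s}\leq\sqrt\epsilon/s$, which is integrable. Since $s h'$ is monotone, this dichotomy suffices. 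Indeed, $(sh')'=h'(1-\xi)\geq0$, so $sh'$ is nondecreasing, and therefore $sh'(s)\geq h'(1)$ for $s\geq1$. This settles the radial bound, and I will use this cleaner route.

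\textbf{Bounded gradient of $\rho=\log(1+s)$.} I will compute $|\partial\rho|_g^2$. Since $\rho$ is radial, only the radial eigenvalue enters:
\[
|\partial\rho|^2_g=\frac{s}{(1+s)^2h'(s)}.
\]
This is bounded because $sh'\geq c\min(s,1)$: for $s\geq1$ we use $sh'\geq h'(1)$, and near $0$ we have $h'(0)>0$.

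\textbf{The weighted spaces and the kernel.} By $U(n)$-invariance, $\Hcal_q$ decomposes orthogonally into homogeneous polynomial pieces. A monomial of degree $d$ lies in $L^2$ if and only if
\[
\int_0^\infty s^{d+n-1}(1+s)^{-q}\,\phi'^{\,n-1}h'\,ds<\infty,
\]
using $dV_g=\phi'^{\,n-1}h'\,dV_{\mathrm{Eucl}}$.

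I will show that every $L^2$ holomorphic $F$ is a polynomial. The volume density is bounded by $C$, since $\phi'$ and $h'$ are decreasing. The lower bounds give density $\gtrsim(1+s)^{-n}$. So integrability holds for $d=1$ when $2+n-1-q-n<-1$, i.e.\ $q>2$; with $q=n+2$ this works. It also fails for $d\geq q$ by the upper bound of the density. This yields $1\leq d_*\leq q-1$. Finiteness, together with monotonicity in $d$ of the integrals (which have the same tail exponent), gives the full range $1,\dots,d_*$.

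The kernel is then $\sum_d c_d s^d$ with $c_d^{-1}$ equal to the norm of the monomial-sum normalization, hence $c_d>0$.

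\textbf{Plurisubharmonicity and the estimates (P), (J).} $\log(1+K(s))$ with $K$ a polynomial having positive coefficients and $c_1>0$ is strictly plurisubharmonic; this is a standard Bergman-type argument, or direct computation via $\log$ of a sum of $|z^\alpha|^2$ including all linear terms. It is an exhaustion.

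For (P) and (J) I take the dyadic annuli $\{2^j\leq s\leq 2^{j+1}\}$. Since $\log(1+K)\sim d_*\log s$, the derivative ratio is controlled by $|\partial\log(1+K)|_g\lesssim|\partial\rho|_g$, which is bounded by the previous step. This gives uniform constants.
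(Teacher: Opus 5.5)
Your metric lower bound and your bound on $|\nabla\rho|_g$ follow the paper's argument in different notation. Up to a normalizing factor, your $h'$ and $\phi'$ are the paper's radial and tangential eigenvalues $h$ and $f$, and your monotonicity of $sh'$ is the paper's $(sh)'=h(1-\xi)\geq0$. (For $\BK\geq0$ one only has $\xi\leq1$, and this comes from completeness, but $\xi\le1$ is all you use.) The real gap is in the last step. Annular properness \textup{(P)} requires $q^{-1}\log(1+K_q)\geq j$ on all of $A_j$, and dyadic annuli cannot give this. For $s\geq1$ one has $K(s)\leq Cs^{d_*}$ with $d_*\leq q-1$, so
\[
\frac1q\log\bigl(1+K(2^j)\bigr)\leq\frac{d_*\log2}{q}\,j+\frac{\log(1+C)}{q}.
\]
Since $d_*\log2/q<1$, this is smaller than $j$ for all large $j$, so \textup{(P)} fails on the inner boundary of $A_j$. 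Because $K$ grows only polynomially, you need $\log s_j\gtrsim (q/d_*)\,j$. The paper takes $s_j$ with $q^{-1}\log(1+K(s_j))=j$, which is possible because this function increases strictly from $0$ to $\infty$. It then puts $R_j=d_g(0,z)$ for $|z|^2=s_j$, and \textup{(P)} on $A_j$ follows by monotonicity.

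Similarly, \textup{(J)} is not a bound on $|\nabla\log(1+K_q)|_g$. It is the estimate $\Dcal_q\leq C_0q^2(1+K_q)$ for the derivative sum $\Dcal_q=\sum_\nu|\nabla s_\nu|_g^2$ of an orthonormal basis. Up to a constant, $\Dcal_q$ is the $g$-trace of the complex Hessian of $K_q$, and in radial variables
\[
\Dcal_q(s)=2\left\{\frac{K'(s)+sK''(s)}{h(s)}+(n-1)\frac{K'(s)}{f(s)}\right\}.
\]
This involves $K''$ and the tangential eigenvalue $f$, neither of which appears in the gradient of a radial function. A bound on $|\nabla\log(1+K_q)|_g$ therefore does not by itself control $\Dcal_q$. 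The paper goes the other way, deducing the gradient bound from \textup{(J)} via $|\nabla K_q|_g\leq2\sqrt{K_q\Dcal_q}$. The missing computation is short. For $s\geq1$, use both lower bounds $f,h\geq\kappa/s$, which you did prove, together with
\[
s^2K''+nsK'=\sum_d d(d+n-1)c_ds^d\leq d_*(d_*+n-1)K.
\]
For $s\leq1$, use $f,h\geq\kappa$.

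Finally, in the integrability step you have the two density bounds reversed. That linear monomials lie in $\Hcal_q$ comes from the upper bound $dV_g\leq C\,dV_{\mathrm{Eucl}}$: the tail is $s^n(1+s)^{-q}$, so one needs $q>n+1$. Your criterion $q>2$ is wrong; for example $q=n+1$ already fails for the flat metric. That $M_d=\infty$ for $d\geq q$ comes from the lower bound $(1+s)^{-n}$, not the upper one.
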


The metric comparison and logarithmic exhaustion in \cref{thm:radial} follow directly from the radial curvature framework of Wu--Zheng \cite{WZ11} and Yang's nonnegative-curvature characterization \cite[Proposition~3.1]{Yang13}. The calculations in \cref{sec:radial} identify the finite polynomial weighted space and verify \textup{(P)} and \textup{(J)} with a single normalization. 

Combining \cref{thm:main-topology} with the conditional uniformization theorem of Datar--Pingali--Seshadri \cite[Theorem~2]{DPS26} gives the following corollary.

\begin{corollary}\label{cor:conditional-uniformization}
Let $(X,g)$ be a complete noncompact K\"ahler surface with $\BK>0$ and $\SCI$. If $(X,g)$ is strongly Stein, then $X$ is biholomorphic to $\C^2$.
\end{corollary}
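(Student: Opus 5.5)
The corollary reduces to the Datar--Pingali--Seshadri theorem once contractibility is supplied, and \cref{thm:main-topology} supplies it. Recall that \cite[Theorem~2]{DPS26} asserts: a complete noncompact K\"ahler surface with $\BK>0$ which is strongly Stein, contractible, and $\SCI$ is biholomorphic to $\C^2$. Of these hypotheses, only contractibility is missing from the statement of \cref{cor:conditional-uniformization}, so the whole argument consists of deriving it.

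First I check the curvature hypotheses of \cref{thm:main-topology}. Positive holomorphic bisectional curvature forces $\Ric>0$, since the Ricci form in a unitary frame is the sum of bisectional curvatures $R(e_i,\bar e_i,V,\bar V)$, each of which is positive. Hence $\Ric\geq0$. $\BK>0$ also implies $\BK\geq0$, which implies $\NQOBC\geq0$. To see the last step, expand the quadratic orthogonal bisectional form $\sum_{i,j}R_{i\bar i j\bar j}(a_i-a_j)^2$ from \cref{def:nqobc} in a unitary frame. Each coefficient $R_{i\bar ij\bar j}$ is a bisectional curvature and so is nonnegative, while the weights $(a_i-a_j)^2$ are nonnegative. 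The paper itself records in the introduction that $\NQOBC\geq0$ is weaker than $\BK\geq0$.

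Next I apply \cref{thm:main-topology} to $(X,g)$, which is complete, noncompact, and $\SCI$ by hypothesis. This gives that $X$ is contractible and has one end. All hypotheses of \cite[Theorem~2]{DPS26} now hold: completeness, noncompactness, $\BK>0$, strong Steinness, contractibility, and $\SCI$. That theorem then yields a biholomorphism $X\cong\C^2$.

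There is no real obstacle. The only point that needs care is confirming that the definitions of $\SCI$ and strong Steinness in \cref{sec:definitions} agree with those used in \cite{DPS26}, so that the cited theorem applies verbatim. If they were to differ, for instance if one convention required a bounded-gradient exhaustion, I would reconcile them at that point. Otherwise the proof is the short chain above.
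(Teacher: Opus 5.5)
Your proposal is correct and matches the paper's proof: check that $\BK>0$ gives $\Ric\geq0$ and $\NQOBC\geq0$, use \cref{thm:main-topology} to obtain contractibility, and then apply \cite[Theorem~2]{DPS26} with the assumed strong Steinness and $\SCI$. Your curvature verifications (Ricci as a trace of bisectional curvatures, and $\NQOBC\geq0$ as a sum of nonnegative bisectional terms with nonnegative weights) fill in exactly the one-line remark the paper makes.
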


Ni--Tam's Busemann construction \cite[Lemma~4.1 and the proof of Theorem~4.2(ii)]{NT03} supplies strong Steinness under an additional sectional-curvature hypothesis.

\begin{corollary}\label{cor:sectional-infinity}
Let $(X,g)$ be a complete noncompact K\"ahler surface with $\BK>0$ and $\SCI$. Suppose that there is a compact set $K\subset X$ such that every real sectional curvature is nonnegative on $X\setminus K$. Then $(X,g)$ is strongly Stein and $X$ is biholomorphic to $\C^2$.
\end{corollary}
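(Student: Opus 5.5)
The strategy is to verify the hypotheses of \cref{cor:conditional-uniformization}. The only new input is strong Steinness, and we obtain it from Ni--Tam's Busemann construction \cite[Lemma~4.1 and the proof of Theorem~4.2(ii)]{NT03}.

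First, since $\BK>0$ implies $\Ric>0$, \cref{thm:sci-simple} shows that $X$ has one end and $\pi_1(X)=0$. \Cref{thm:main-topology} then shows that $X$ is contractible, using that $\BK\geq0$ implies $\NQOBC\geq0$.

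Next we build the Busemann function. Fix $o\in X$ and a ray $\gamma$ from $o$; a ray exists because $X$ is complete and noncompact. Let $\beta(x)=\lim_{t\to\infty}\bigl(t-d(x,\gamma(t))\bigr)$, or the averaged Busemann function over all rays from $o$. Ni--Tam's Lemma~4.1 handles the region where the real sectional curvature is nonnegative: there the Busemann function is convex in the support/barrier sense, and its plurisubharmonic smoothing is strictly plurisubharmonic once $\BK>0$ is used. In the proof of Theorem~4.2(ii) they show that, when sectional curvature is nonnegative outside a compact set, a suitable modification of $\beta$ is an exhaustion that is strictly plurisubharmonic outside a compact set. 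It is Lipschitz, hence has bounded gradient.

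Two points need care here.
\begin{itemize}
\item Properness. The Busemann function taken over all rays from $o$, namely $\sup_\gamma\beta_\gamma$, grows at least like a positive multiple of $d(o,\cdot)$. This uses one end together with the positivity of curvature, which rules out lines by Cheeger--Gromoll, since a line would split off a factor and contradict $\BK>0$.
\item The compact set $K$. The Busemann function may fail to be convex on $K$. To fix this, add $\varepsilon\varphi$, where $\varphi$ is a bounded strictly plurisubharmonic function on a neighborhood of $K$. Such a $\varphi$ exists because $X$ is Stein: it is a complete noncompact manifold with $\BK>0$ and a proper plurisubharmonic exhaustion, so Greene--Wu's argument applies. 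Alternatively, compose with a convex increasing function $\chi$ and glue with a smooth strictly plurisubharmonic exhaustion on a large sublevel set.
\end{itemize}

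Smoothing is done by Greene--Wu's approximation for continuous strictly plurisubharmonic functions, which preserves the Lipschitz bound up to $\varepsilon$. The result is a smooth strictly plurisubharmonic exhaustion with bounded gradient and complex Hessian bounded below locally. This is the definition of strong Steinness recalled in \cref{sec:definitions}, so $(X,g)$ is strongly Stein.

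Finally, with $\BK>0$, SCI and strong Steinness in hand, \cref{cor:conditional-uniformization} gives $X\cong\C^2$ biholomorphically.

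The main obstacle is the compact set $K$: we must match the precise form of strong Steinness in \cref{sec:definitions} while keeping both the gradient bound and strict plurisubharmonicity across the gluing region. I would first try to quote Ni--Tam's Theorem~4.2(ii) proof verbatim, since it treats exactly sectional curvature $\geq0$ off a compact set. I would then check that the gluing constants can be chosen so that the gradient stays bounded on the noncompact part, where $\chi'$ is constant.
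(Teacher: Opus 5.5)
\paragraph{Gap: properness of the Busemann function.} Citing Ni--Tam's proof of Theorem~4.2(ii) for the exhaustion property is the right source. The justification you then give in the ``Properness'' bullet is not valid. One-endedness and the absence of lines only exclude a Cheeger--Gromoll splitting; they give no lower bound on $B=\sup_\gamma b_\gamma$, let alone linear growth. That argument also never uses the hypothesis on $X\setminus K$. If it were correct, Wu's plurisubharmonicity of $B$ plus smoothing would give strong Steinness from $\BK>0$ alone, and the paper explicitly leaves that open at the end of \cref{sec:uniformization}. The sectional-curvature assumption is needed exactly here. As recalled in Ni--Tam's proof of Theorem~4.2, nonnegative sectional curvature outside a compact set is what makes $B$ an exhaustion.

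\paragraph{Misplaced obstacle at $K$.} You spend the hypothesis in the wrong place. No convexity of $B$ is needed: under $\BK\geq0$, Wu's result makes $B$ plurisubharmonic on all of $X$, including on $K$. So there is no defect there and no gluing is required. Your repairs would not work as stated anyway:
\begin{itemize}
\item Adding $\varepsilon\varphi$ with $\varphi$ defined only near $K$ needs a cutoff, and the cutoff destroys plurisubharmonicity.
\item Deducing Steinness from ``a proper plurisubharmonic exhaustion'' presupposes the properness you have not established, so it is circular.
\end{itemize}

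\paragraph{The paper's route.} Once properness of $B$ is in hand, the paper takes $u=P_\tau B$. As used there, the Ni--Tam input is this heat-flow regularization, which gives $u$ smooth with $\ddc u>0$ and $|\nabla u|_g\leq1$. The heat-kernel first-moment bound $|u-B|\leq C_n\sqrt\tau$ then keeps $u$ proper. A Greene--Wu smoothing with uniform error would serve equally well for that last step. Two further remarks:
\begin{itemize}
\item \cref{def:strongly-stein} only asks for $\ddc\rho\geq0$, a gradient bound, and compact sublevels. Strictness and a local Hessian lower bound are not required.
\item Your contractibility paragraph is redundant, since \cref{cor:conditional-uniformization} already incorporates it.
\end{itemize}
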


We prove both corollaries in \cref{sec:uniformization}. The additional hypothesis in \cref{cor:sectional-infinity} makes the Busemann function proper; heat regularization then yields a smooth plurisubharmonic exhaustion with bounded gradient.

Without SCI, the following statement collects consequences of Sormani's loops-to-infinity theorem \cite[Theorem~8]{Sor01}, Ni--Tam's strictly plurisubharmonic function \cite[Theorem~4.2(ii)]{NT03}, weighted point separation and local coordinates \cite[Proposition~2.2]{DPS26}, the Bochner--Hartogs theorem of Napier--Ramachandran \cite[Theorem~0.2]{NR18}, and V\^aj\^aitu's cohomological Stein criterion \cite[Proposition~1]{Vaj10}.

\begin{theorem}\label{thm:cohomological-reduction}
Let $(X,g)$ be a complete noncompact K\"ahler surface with $\BK>0$. Then:
\begin{enumerate}
\item $X$ has one end and the geodesic loops-to-infinity property;
\item $X$ has no compact positive-dimensional analytic subsets;
\item global holomorphic functions separate points and provide local holomorphic coordinates;
\item $H_c^1(X,\OO_X)=0$;
\item if $\dim_\C H^1(X,\OO_X)<\infty$, then $X$ is Stein.
\end{enumerate}
Consequently a non-Stein example would have $\dim_\C H^1(X,\OO_X)=\infty$.
\end{theorem}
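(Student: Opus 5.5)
We prove the five items in order, each from the cited input, and then read off the final sentence.

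\emph{Items (1) and (2).} For (1), $\BK>0$ gives $\Ric>0$. Sormani's theorem \cite[Theorem~8]{Sor01} then gives the geodesic loops-to-infinity property; we will check that her hypotheses are met by any complete noncompact manifold with $\Ric\geq0$. One end follows from Cheeger--Gromoll splitting \cite{CG71}. If $X$ had two ends, it would contain a line and split isometrically as $N\times\R$. The K\"ahler structure then splits off a flat factor $\C$, because $J$ of the line direction is parallel. Bisectional curvature vanishes on the pair formed by this flat direction and any other direction, contradicting $\BK>0$.

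For (2), Ni--Tam \cite[Theorem~4.2(ii)]{NT03} supply a continuous (indeed smooth) strictly plurisubharmonic function $u$ on $X$ under $\BK>0$. Let $A\subset X$ be a compact analytic subset of positive dimension, and take an irreducible component $A'$ of positive dimension. Then $u|_{A'}$ is strictly plurisubharmonic on a compact analytic space. By the maximum principle on its normalization, $u$ is constant on $A'$, which contradicts strictness.

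\emph{Item (3).} This is quoted from \cite[Proposition~2.2]{DPS26}. They construct holomorphic functions by H\"ormander $L^2$ estimates with weights built from the Ni--Tam function. These functions separate points and give local coordinates. We only need to confirm that their hypotheses are complete, $\BK>0$, and complex dimension two, with no strong Steinness assumed. This is the step where I would carefully check the citation's hypotheses.

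\emph{Items (4) and (5), and the consequence.} For (4), we apply the Bochner--Hartogs theorem of Napier--Ramachandran \cite[Theorem~0.2]{NR18}. It gives $H^1_c(X,\OO_X)=0$ for connected complete K\"ahler manifolds that have one end and are "hyperbolic" in their sense, or that admit suitable exhaustions. The required input is item (1) together with the Ni--Tam plurisubharmonic function. Verifying that the NR hypotheses follow from $\BK>0$ and one end is the main obstacle. I would first try their weakly $1$-complete/one-end version, using the existence of nonconstant holomorphic functions from (3) to exclude the parabolic degenerate cases.

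For (5), V\^aj\^aitu's criterion \cite[Proposition~1]{Vaj10} applies to a complex manifold that is holomorphically separable, carries local coordinates by global functions, and contains no compact positive-dimensional analytic subsets. By (2) and (3) this is our $X$. The criterion says that finite dimensionality of $H^1(X,\OO_X)$ (in surface dimension) forces $X$ to be Stein. Combined with $H^1_c=0$ from (4), this yields Steinness.

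The final sentence is the contrapositive of (5).
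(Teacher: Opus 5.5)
Your outline matches the paper item by item: Cheeger--Gromoll and Sormani for (1), the Ni--Tam function on the normalization of a compact curve for (2), \cite[Proposition~2.2]{DPS26} with weights built from that function for (3), Napier--Ramachandran for (4), and V\^aj\^aitu for (5). However, item (4) is not actually closed, and the route you propose for closing it cannot work. Weak $1$-completeness means there is a continuous plurisubharmonic \emph{exhaustion} $\varphi$, and nothing of that kind is available under $\BK>0$. If it were, let $\psi_0$ be the Ni--Tam strictly plurisubharmonic function and take $\chi$ convex and increasing with $\chi(t)\geq t+\sup_{\{\varphi\leq t\}}|\psi_0|$. Then $\chi\circ\varphi+\psi_0$ would be a continuous strictly plurisubharmonic exhaustion, so $X$ would already be Stein. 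Assuming weak $1$-completeness is therefore the same as assuming the conclusion of (5), which is the open question.

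The paper instead applies \cite[Theorem~0.2]{NR18} to a connected complete K\"ahler manifold with exactly one end that carries a plurisubharmonic function which is strictly plurisubharmonic on complex two-dimensional germs. Both hypotheses come directly from (1) and from $\psi_0$, which is strictly plurisubharmonic everywhere. The paper uses no exhaustion, no hyperbolicity, and no nonconstant holomorphic functions. If you prefer a dichotomy form of Bochner--Hartogs whose alternative is a proper holomorphic map onto a Riemann surface, item (2) rules that alternative out, since its fibres would be compact curves.

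Two smaller corrections. First, in (5) you need neither (4) nor (2). As the paper uses it, V\^aj\^aitu's criterion asks only for holomorphic spreadability, which the local coordinates from (3) give, together with $\dim_\C H^1(X,\OO_X)<\infty$. The phrase ``combined with $H^1_c=0$'' suggests a dependence that is not there. Second, in (1), the loops-to-infinity property does not follow from $\Ric\geq0$ alone. Sormani's result has a split-double-cover exception, realized for instance by the flat open M\"obius band. Your verification must therefore use $\Ric>0$, which you already have, to exclude that case; it cannot be a check valid for every complete manifold with $\Ric\geq0$.
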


The loops-to-infinity property means that every fundamental-group element can be represented outside arbitrarily large compact sets, with base points transported along a fixed ray. These representatives need not contract there. The proof of \cref{thm:cohomological-reduction} is given in \cref{sec:cohomological-proof}.

\section{Preliminaries}\label{sec:definitions}

\begin{definition}\label{def:strongly-stein}
A K\"ahler manifold $(X,g)$ is \emph{strongly Stein} if it admits a smooth real function $\rho:X\to\R$ such that
\[
\ddc\rho\geq0,\qquad|\nabla\rho|_g\leq1,
\]
and every sublevel $\{\rho\leq a\}$ is compact. The constant $1$ in the gradient bound may be replaced by any finite uniform bound, since rescaling $\rho$ preserves plurisubharmonicity and properness.
\end{definition}

\begin{definition}\label{def:sci}
A connected noncompact manifold $X$ is \emph{simply connected at infinity} if, for every compact set $K\subset X$, there is a compact $L\supset K$ such that every loop in $X\setminus L$ is null-homotopic in $X\setminus K$.
\end{definition}

If $X$ has one end, choose a cofinal nested compact exhaustion $K_i$ with connected complements $U_i=X\setminus K_i$. Then \cref{def:sci} is equivalent to the statement that for every $i$ there is $j>i$ such that
\[
\pi_1(U_j)\longrightarrow\pi_1(U_i)\quad\text{is the zero homomorphism},
\]
after the usual changes of base point along a proper ray. Abelianization and the universal coefficient theorem give
\[
 H^1_\infty(X;A)=0
\]
for every abelian coefficient group $A$.

For later use, compactly supported cohomology may be written as
\[
 H^q_c(X;A)=\varinjlim_iH^q(X,U_i;A).
\]
Taking the filtered direct limit of the long exact sequences of the pairs $(X,U_i)$ gives the exact sequence
\begin{equation}\label{eq:end-sequence}
\cdots\longrightarrow H^{q-1}_\infty(X;A)\longrightarrow H^q_c(X;A)\longrightarrow H^q(X;A)\longrightarrow H^q_\infty(X;A)\longrightarrow\cdots.
\end{equation}

\begin{definition}\label{def:holomorphic-properties}
A complex manifold $X$ is \emph{holomorphically separable} if any two distinct points are separated by a global holomorphic function. It is \emph{holomorphically spreadable} if, at each $x\in X$, finitely many global holomorphic functions have $x$ as an isolated point of their common level set. In particular, global holomorphic functions that provide local coordinates at each point imply holomorphic spreadability.
\end{definition}

In holomorphic coordinates, our conventions for the Riemannian metric $g$ and its associated K\"ahler form $\omega$ are
\[
g=\operatorname{Re}\!\left(\sum_{\alpha,\beta}g_{\alpha\bar\beta}\,dz_\alpha\otimes d\bar z_\beta\right)\qquad\textrm{and}\qquad\omega=\frac{i}{2}\sum_{\alpha,\beta}g_{\alpha\bar\beta}\,dz_\alpha\wedge d\bar z_\beta.
\]
We write $d_g$, $dV_g$, and $\nabla$ for the distance, volume measure, and Levi-Civita connection of $g$; on functions, $\nabla$ denotes the $g$-gradient. Unsubscripted distances and norms refer to this same metric unless another metric is specified. For a complex-valued function $F$, we set
\[
 |\nabla F|_g^2=|\nabla\operatorname{Re}F|_g^2+|\nabla\operatorname{Im}F|_g^2.
\]
For a real function $v$,
\[
|\nabla v|_g^2=4\sum_{\alpha,\beta}g^{\alpha\bar\beta}v_\alpha v_{\bar\beta}.
\]
We use $g$ in comparisons of Riemannian metrics and $\omega$ in comparisons of real $(1,1)$-forms.

The following curvature condition was formulated explicitly by Wu--Yau--Zheng \cite[Theorem~1]{WYZ09} in their study of boundary classes of K\"ahler cones.

\begin{definition}\label{def:nqobc}
A K\"ahler manifold has \emph{nonnegative quadratic orthogonal bisectional curvature}, written $\NQOBC\geq0$, if at every point, in every unitary frame, and for every real vector $(a_1,\ldots,a_n)$,
\begin{equation}\label{eq:nqobc}
\sum_{\alpha,\beta=1}^n R_{\alpha\bar\alpha\beta\bar\beta}(a_\alpha-a_\beta)^2\geq0.
\end{equation}
\end{definition}

Nonnegative holomorphic bisectional curvature implies both $\Ric\geq0$ and $\NQOBC\geq0$. In complex dimension two, \eqref{eq:nqobc} is equivalent to $R_{1\bar1 2\bar2}\geq0$ in every unitary frame, since its left-hand side is $2R_{1\bar1 2\bar2}(a_1-a_2)^2$.

\section{Simple connectivity, ends, and contractibility}
\label{sec:refined-proof}

\subsection{Simple connectivity and ends}
\label{sec:sci-and-ends}

We first prove \cref{thm:sci-simple} using the following two propositions.
\begin{proposition}\label{prop:one-end}
A complete noncompact simply connected K\"ahler manifold with $\Ric\geq0$ has exactly one end.
\end{proposition}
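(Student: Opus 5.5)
The plan is to argue by contradiction using the Cheeger--Gromoll splitting theorem \cite{CG71}, as indicated in the introduction. Suppose $(X,g)$ is complete, noncompact, simply connected, K\"ahler with $\Ric\geq0$, and has at least two ends. A noncompact complete manifold has at least one end, so only the multi-end case needs excluding. With two ends we choose a compact set $K$ such that $X\setminus K$ has two unbounded components. Picking points $p_k,q_k$ diverging into the two different components and taking minimizing geodesics between them, each geodesic must pass through $K$. A standard Arzel\`a--Ascoli limit then produces a line, meaning a complete geodesic $\gamma:\R\to X$ minimizing between any two of its points. By Cheeger--Gromoll, $(X,g)$ splits isometrically as $N\times\R$ with $\gamma$ an $\R$-fiber, where $N$ is a complete manifold with $\Ric\geq0$.

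Next I would use the K\"ahler structure. The splitting is induced by the Busemann function $b$ of $\gamma$, which is smooth with $\nabla^2 b=0$, so $\nabla b$ is a parallel unit vector field. Since $J$ is parallel, $J\nabla b$ is also a parallel unit field, orthogonal to $\nabla b$. It is tangent to the level set $N=\{b=0\}$. A parallel vector field on $X=N\times\R$ that is orthogonal to the $\R$-factor restricts to a parallel field on $N$. Applying de Rham decomposition on the simply connected $X$, both $\nabla b$ and $J\nabla b$ span flat factors, so $X\cong N'\times\R^2$ isometrically. Equivalently, the flat factor generated by all parallel vector fields is $J$-invariant, so it is $\C^k$ with $k\geq1$ and $X\cong N'\times\C^k$ holomorphically isometrically. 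Simple connectivity is essential here: without it, the parallel field $J\nabla b$ would integrate only to a local splitting, or to a circle factor as in $\C^*$-type quotients.

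Finally, counting ends: $N'\times\R^2$ has one end whenever $N'$ is connected. If $N'$ is compact, the complement of a large compact set is $N'\times(\R^2\setminus \text{disk})$, which is connected. If $N'$ is noncompact, then $N'\times\R^2$ minus a product compact set $A\times D$ is connected, since any point can be joined through the region $N'\times(\R^2\setminus D)$. This contradicts the assumption of at least two ends, so $X$ has exactly one end.

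The main obstacle is the middle step. We must justify that the single parallel field $J\nabla b$ produces a global $\R^2$ factor rather than only a local one. I would handle this by invoking the de Rham decomposition theorem on the simply connected complete manifold $X$, applied to the parallel distribution spanned by $\nabla b$ and $J\nabla b$, whose orthogonal complement is also parallel. A more elementary alternative is to note that $X\cong N\times\R$ with $\pi_1(N)=\pi_1(X)=0$, and to apply Cheeger--Gromoll again on $N$. The flow of $J\nabla b$ is by isometries of $N$, and a parallel field with complete flow on a simply connected complete manifold splits off a line.
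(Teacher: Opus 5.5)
Your proof is correct. It shares the paper's skeleton: a line obtained from two ends, the Cheeger--Gromoll splitting $X=N\times\R$, and the observation that $J\nabla b$ is a parallel unit field tangent to $N$ that simple connectivity lets you globalize. The finish is different.

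The paper first reuses the product end-count to force $N$ to be compact, since a noncompact $N$ would make $N\times\R$ one-ended. It then takes the parallel one-form $g(JV,\cdot)|_{TN}$ and writes it as $du$ on the simply connected $N$. A maximum of $u$ on compact $N$ gives $du=0$, contradicting the constant nonzero norm.

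You instead apply de Rham decomposition to split off a flat $\R^2$ factor; in fact the factor is a $J$-invariant $\C^k$. You then count ends of $N'\times\R^2$ directly. That count works uniformly for any connected $N'$, so you never need to treat compact $N$ by a separate argument.

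The paper's route is more elementary: it only needs exactness of closed one-forms on simply connected manifolds and a maximum on a compact set. Yours invokes the heavier de Rham theorem, but it returns more structure, namely a holomorphically isometric Euclidean factor.

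Your ``elementary alternative'' is essentially the paper's mechanism used the other way around. The exact parallel form $du$ has $u$ $1$-Lipschitz and increasing at unit speed along integral curves of $\nabla u$. Those curves are therefore lines, which gives the second splitting. The paper plays the same primitive $u$ against compactness instead.
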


\begin{proof}
Suppose $Y$ has at least two ends. Taking a limit of minimizing geodesics joining points escaping through distinct ends produces a line. The Cheeger--Gromoll splitting theorem \cite{CG71} gives an isometry $Y=N\times\R$. The factor $N$ must be compact: if $N$ were noncompact, the complement of a compact product $C\times[-a,a]$ would be connected through the regions $N\times(a,\infty)$, $N\times(-\infty,-a)$, and $(N\setminus C)\times\R$, giving one end. Such compact products are cofinal among compact sets.

Let $V=\partial_t$ be the parallel unit vector in the $\R$ direction. Since $\nabla J=0$, the vector $JV$ is also parallel. It is orthogonal to $V$, so its restriction to $N\times\{0\}$ is tangent to $N$. The one-form $\alpha=g(JV,\cdot)|_{TN}$ is parallel, closed, and has constant nonzero norm. Since $N\times\R$ is simply connected, so is $N$. Thus $\alpha=du$ for a smooth function $u$ on $N$. At a maximum of $u$ on compact $N$, $du=0$, a contradiction.
\end{proof}

\begin{proposition}\label{prop:sci-covers}
Let $p:\widehat X\to X$ be a connected finite cover of degree $d$, where $X$ is a connected noncompact manifold satisfying $\SCI$. Then $\widehat X$ is $\SCI$ and has at least $d$ ends.
\end{proposition}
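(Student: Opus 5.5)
The plan has two parts. First we show that $\widehat X$ has at least $d$ ends. Then we show that $\widehat X$ is $\SCI$, reusing the same lifting argument.

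\emph{At least $d$ ends.} Fix a compact $K\subset X$. Replacing $K$ by a larger compact set, we may assume $K$ is a compact connected codimension-zero submanifold with boundary; such sets are cofinal. Choose $L\supset K$ as in \cref{def:sci}. Let $W$ be an unbounded component of $X\setminus L$; one exists because $X$ is noncompact and the complement of a compact set in a noncompact manifold has an unbounded component. Let $W'$ be the component of $X\setminus K$ containing $W$.

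Every loop in $W$ is null-homotopic in $W'$. Hence the image of $\pi_1(W)\to\pi_1(X)$ factors through $\pi_1(W')$ via a map that kills $\pi_1(W)$, so this image is trivial. The covering $p$ restricted to $p^{-1}(W)$ therefore corresponds to the trivial subgroup. By the lifting criterion, $W$ lifts homeomorphically, and $p^{-1}(W)$ is a disjoint union of exactly $d$ copies $W_1,\dots,W_d$, each mapped homeomorphically onto $W$.

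Each $W_k$ is unbounded in $\widehat X$, because $p$ is proper and $W$ is unbounded. We need these copies to lie in distinct components of $\widehat X\setminus p^{-1}(L)$. This holds since $p^{-1}(W)$ is a union of components of $p^{-1}(X\setminus L)$, and $p^{-1}(W)\to W$ is a trivial $d$-sheeted cover. Thus $\widehat X\setminus p^{-1}(L)$ has at least $d$ unbounded components, and $p^{-1}(L)$ is compact because $p$ is finite.

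To get $d$ distinct ends we need more: the unbounded components of $\widehat X\setminus\widehat C$ must stay distinct for every larger compact $\widehat C$. We handle this by running the same argument with $K$ replaced by $p(\widehat C)\cup K$. Each unbounded component of the new complement sits inside some $W$-type region, and inside it the $d$ sheets are separated. Since the sheets over nested sets are compatible, each sheet over a deeper region is contained in a sheet over the shallower one. We therefore obtain $d$ compatible decreasing sequences of unbounded components, that is, $d$ distinct ends. This compatibility bookkeeping is the step requiring the most care, and it is where I expect the main obstacle. The cleanest route is to fix a single proper ray in $W$, lift it to $d$ disjoint proper rays, and show that these rays are pairwise inequivalent ends. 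Two of the lifted rays being joined outside $p^{-1}(L)$ would contradict the sheet separation, since the sheets over $W$ are separated already in $\widehat X\setminus p^{-1}(L)$.

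\emph{$\widehat X$ is $\SCI$.} Let $\widehat K\subset\widehat X$ be compact and put $K=p(\widehat K)$, which is compact. Choose $L$ for $K$ as in \cref{def:sci} and set $\widehat L=p^{-1}(L)$, which is compact since $p$ is finite. Take a loop $\gamma$ in $\widehat X\setminus\widehat L$. Then $p\circ\gamma$ is a loop in $X\setminus L$, so it is null-homotopic in $X\setminus K$. By the homotopy lifting property for the covering $p$, this null-homotopy lifts to a null-homotopy of $\gamma$. The lifted homotopy avoids $p^{-1}(K)\supset\widehat K$, because its projection avoids $K$. Hence $\gamma$ is null-homotopic in $\widehat X\setminus\widehat K$, and $\widehat X$ is $\SCI$.
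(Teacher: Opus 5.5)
Your proposal is correct and follows essentially the paper's argument: the same lifting of a null-homotopy from $X\setminus p(\widehat K)$ to get SCI for $\widehat X$, and the same trivial-monodromy splitting of $p^{-1}(W)$ into $d$ homeomorphic sheets, which are distinct non-relatively-compact components of $\widehat X\setminus p^{-1}(L)$. The compatibility bookkeeping you flag as the main obstacle is not an additional difficulty. It is the standard fact, which the paper invokes directly, that distinct unbounded components of the complement of a single compact set determine distinct ends, and your lifted-proper-ray argument is a valid way to verify it.
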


\begin{proof}
Given a compact set $C\subset\widehat X$, apply SCI in $X$ to $p(C)$ and choose a compact set $L\supset p(C)$ such that loops outside $L$ contract outside $p(C)$. A loop outside $p^{-1}(L)$ projects to such a loop. The null-homotopy lifts to a disk outside $p^{-1}(p(C))$, hence outside $C$. The set $p^{-1}(L)$ is compact because the cover is finite. Thus $\widehat X$ is SCI.

SCI also gives a compact set $L$ such that every loop outside $L$ is null-homotopic in $X$. Enlarge $L$ to a compact smooth domain, so its complement has finitely many components, and let $U$ be a component that is not relatively compact. Since $\pi_1(U)\to\pi_1(X)$ is trivial, covering-space monodromy gives
\[
p^{-1}(U)=U_1\sqcup\cdots\sqcup U_d,\qquad p|_{U_\nu}:U_\nu\longrightarrow U\text{ a homeomorphism}.
\]
These are distinct components of $\widehat X\setminus p^{-1}(L)$, and none is relatively compact: a relatively compact lift would have relatively compact image. Each such component contains an end, and distinct components determine distinct ends. Hence $\widehat X$ has at least $d$ ends.
\end{proof}

\begin{proof}[Proof of \cref{thm:sci-simple}]
Suppose $\pi_1(X)\neq0$. Fix a nontrivial element and a ray from its base point. SCI implies that all sufficiently remote loops are null-homotopic in $X$. The chosen nontrivial element therefore fails the loops-to-infinity property along the ray: change of base point preserves the trivial class, so a loop that is null-homotopic in $X$ cannot represent the chosen element.

Sormani's double-cover theorem \cite[Theorem~11]{Sor01} produces a connected double cover with an isometric splitting
\[
 \widehat X=N\times\R.
\]
By \cref{prop:sci-covers}, $\widehat X$ is SCI. Every loop in the product can be translated in the $\R$ direction beyond any given compact set. Since SCI makes sufficiently remote loops null-homotopic in $\widehat X$, translation shows that every loop in $\widehat X$ is null-homotopic. Thus $\widehat X$ is simply connected. The pulled-back metric and complex structure are complete K\"ahler with $\Ric\geq0$, so \cref{prop:one-end} gives one end. But \cref{prop:sci-covers} gives at least two ends, a contradiction. Consequently $\pi_1(X)=0$. Applying \cref{prop:one-end} directly to $X$
now proves one-endedness.
\end{proof}

\subsection{Integral acyclicity and contractibility}
\label{sec:integral-contractibility}

We now turn from simple connectivity to contractibility and prove \cref{thm:refined-contractibility}. Datar--Pingali--Seshadri's real-cohomology vanishing theorem \cite[Proposition~6.1]{DPS26}, combined with integral Poincar\'e duality, yields integral acyclicity. Simple connectivity then gives contractibility.

\begin{proof}[Proof of \cref{thm:refined-contractibility}]
Tracing the bisectional curvature in a unitary frame gives
\[
 \Ric(Z,\bar Z)=\sum_{a=1}^2R(Z,\bar Z,e_a,\bar e_a)\geq0.
\]
Since $X$ is simply connected, \cref{prop:one-end} gives one-endedness. Fix a cofinal compact exhaustion $K_i$ with connected complements $U_i=X\setminus K_i$.

The coefficient inclusion $\Z\hookrightarrow\R$ induces an injection
\[
 H^1(U_i;\Z)\longrightarrow H^1(U_i;\R).
\]
Indeed, since $U_i$ is connected, the universal coefficient theorem identifies these groups naturally with $\Hom(H_1(U_i;\Z),\Z)$ and $\Hom(H_1(U_i;\Z),\R)$, respectively.
Filtered direct limits are exact, so there is an injection
\[
 H^1_\infty(X;\Z)\longrightarrow H^1_\infty(X;\R).
\]
The real end-cohomology hypothesis therefore implies $H^1_\infty(X;\Z)=0$.

By \cite[Proposition~6.1]{DPS26}, a complete noncompact K\"ahler $n$-manifold with $\BK\geq0$ and $H^1_\infty(X;\R)=0$ satisfies $H^{2n-2}(X;\R)=0$. Here $n=2$, so
\begin{equation}\label{eq:H2-real-zero}
 H^2(X;\R)=0.
\end{equation}

Set $G=H_2(X;\Z)$. Since $\pi_1(X)=0$, the degree-one Hurewicz theorem gives
\[
H_1(X;\Z)=0.
\]
The degree-two universal coefficient theorem then gives natural identifications
\[
H^2(X;\Z)\cong\Hom(G,\Z),\qquad H^2(X;\R)\cong\Hom(G,\R).
\]
The inclusion $\Z\hookrightarrow\R$ induces an injection from the first group into the second. Thus \eqref{eq:H2-real-zero} implies
\[
H^2(X;\Z)=0.
\]
The integral end sequence \eqref{eq:end-sequence} now contains
\[
H^1_\infty(X;\Z)\longrightarrow H^2_c(X;\Z)\longrightarrow H^2(X;\Z).
\]
Both outer groups vanish, so $H^2_c(X;\Z)=0$. Integral Poincar\'e duality on the oriented real four-manifold $X$ yields
\[
 H_2(X;\Z)\cong H^2_c(X;\Z)=0.
\]

It remains to treat degrees three and four. Because $X$ and all $U_i$ are connected, the restriction
\[
 H^0(X;\Z)\longrightarrow H^0_\infty(X;\Z)
\]
is an isomorphism. Also $H^1(X;\Z)=0$. The degree-zero part of \eqref{eq:end-sequence} gives $H^1_c(X;\Z)=0$, and duality gives $H_3(X;\Z)=0$. A compactly supported locally constant function on a connected noncompact manifold is zero; hence $H^0_c(X;\Z)=0$ and
$H_4(X;\Z)=0$. A smooth triangulation supplies a CW model of dimension at most four, so all higher homology groups vanish as well. Thus
\[
\widetilde H_k(X;\Z)=0\qquad(k\geq0).
\]

If some homotopy group were nonzero, choose the least $k\geq2$ with $\pi_k(X)\neq0$. The Hurewicz theorem would give $\pi_k(X)\cong H_k(X;\Z)=0$, a contradiction. All positive-degree homotopy groups vanish, and Whitehead's theorem applied to a CW model of $X$ shows that $X$ is contractible; see \cite{Hat02} for the duality, universal coefficient, Hurewicz, and Whitehead theorems used here.
\end{proof}

\subsection{Bochner vanishing and contractibility}
\label{sec:weak-topology}

We now prove \cref{thm:main-topology} under $\Ric\geq0$ and $\NQOBC\geq0$. The preceding integral-cohomology argument applies once we establish vanishing of the second real cohomology group. The role of $\NQOBC$ in the Bochner formula for $(1,1)$-forms is classical; see \cite[Lemma~1.1]{CT12} for the compact case. The following lemma uses $L^2$-cutoffs and the same curvature terms as \cite[Proposition~6.1]{DPS26} to obtain the required vanishing under the weaker curvature assumptions.

\begin{lemma}\label{lem:weak-bochner}
Let $(X,g)$ be a complete noncompact K\"ahler manifold of complex dimension $n$, with $\Ric\geq0$ and $\NQOBC\geq0$. Every real harmonic two-form in $L^2$ vanishes. If, in addition, $H^1_\infty(X;\R)=0$, then
\[
 H^{2n-2}(X;\R)=0.
\]
\end{lemma}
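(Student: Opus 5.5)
The plan has two parts: an $L^2$ Bochner vanishing argument for harmonic two-forms, followed by a topological step that converts this into vanishing of $H^{2n-2}(X;\R)$.

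\textbf{Part one: $L^2$ harmonic two-forms vanish.} Let $\eta$ be a real harmonic two-form in $L^2$. On a complete manifold, an $L^2$ form satisfying $\Delta\eta=0$ is closed and coclosed; this is Gaffney's cutoff argument. Since $X$ is K\"ahler, the Hodge Laplacian commutes with the bidegree projections and with the Lefschetz operators $L$ and $\Lambda$. Hence $\eta$ splits into $L^2$ harmonic pieces: a part of type $(2,0)+(0,2)$, a multiple $f\omega$, and a primitive real $(1,1)$ part $\eta_0$.

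I treat each piece with the Weitzenb\"ock formula $\Delta=\nabla^*\nabla+\mathcal R$.
\begin{itemize}
\item For $(2,0)$-forms, $\mathcal R$ is given by Ricci terms and is $\geq0$ when $\Ric\geq0$.
\item The function $f=\Lambda\eta/n$ is harmonic and in $L^2$, hence constant. Since $X$ is complete noncompact with $\Ric\geq0$, Yau's volume growth estimate gives infinite volume, so $f=0$.
\item For a real $(1,1)$-form, diagonalize it at a point in a unitary frame as $\sum a_\alpha\,\sqrt{-1}e^\alpha\wedge\bar e^\alpha$. The curvature term then equals $\tfrac12\sum_{\alpha,\beta}R_{\alpha\bar\alpha\beta\bar\beta}(a_\alpha-a_\beta)^2$, the quadratic form in \cite[Lemma~1.1]{CT12}, and this is $\geq0$ by $\NQOBC\geq0$.
\end{itemize}

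So each piece $\phi$ satisfies $\langle\mathcal R\phi,\phi\rangle\geq0$ pointwise. Take cutoffs $\chi_R$ with $|\nabla\chi_R|\leq C/R$. Integrating gives
\[
\int\chi_R^2|\nabla\phi|^2\leq C R^{-2}\int|\phi|^2\to0,
\]
so $\phi$ is parallel. A parallel form has constant norm, so infinite volume and $\phi\in L^2$ force $\phi=0$.

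\textbf{Part two: $H^{2n-2}(X;\R)=0$.} Poincar\'e duality gives $H^{2n-2}(X;\R)\cong (H^2_c(X;\R))^*$, so it suffices to show $H^2_c(X;\R)\to H^2(X;\R)$ is zero. By \cref{eq:end-sequence} and $H^1_\infty(X;\R)=0$, this map is injective, so the goal becomes $H^2_c(X;\R)=0$.

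Take a class in $H^2_c(X;\R)$ represented by a compactly supported closed form $\alpha$. Let $h$ be its $L^2$ harmonic projection, obtained via the Hodge--de Rham--Kodaira decomposition on a complete manifold. Then $\alpha-h$ lies in the $L^2$ closure of $d\Omega^1_c$. By Part one, $h=0$, so $\alpha\in\overline{d\Omega^1_c}$.

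\textbf{Main obstacle.} The hard step is promoting ``$\alpha$ lies in $\overline{d\Omega^1_c}$'' to ``$\alpha$ is exact with compact support''. The image of $H^2_c$ in $H^2$ is detected by pairing with closed $(2n-2)$-forms through Poincar\'e duality, but those forms need not be in $L^2$. This is exactly where the end-cohomology hypothesis is needed, to localize. Concretely, write $\alpha=\lim d\beta_k$ in $L^2$ with $\beta_k$ compactly supported, and try to solve $\alpha=d\beta$ in $L^2$ via the closed-range property. If closed range is unavailable, I would follow \cite[Proposition~6.1]{DPS26}: show that $\alpha=d\beta$ with $\beta$ closed outside a compact set $K$. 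Then $H^1(X\setminus K)\to H^1_\infty=0$ lets us modify $\beta$ by an exact form near infinity so that it becomes compactly supported, after possibly enlarging $K$. This requires a correction argument to produce $\beta$ with $d\beta=0$ off $K$, and I would mirror their proof verbatim, with the curvature input replaced by Part one.
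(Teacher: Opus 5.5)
Your Part one is essentially the paper's argument, and your reduction of $H^{2n-2}(X;\R)=0$ to showing that $H^2_c(X;\R)\to H^2(X;\R)$ is both injective (by the end sequence) and zero is sound. Splitting off the trace part $f\omega$ is harmless but unnecessary, since the quadratic form $\sum R_{\alpha\bar\alpha\beta\bar\beta}(a_\alpha-a_\beta)^2$ does not see it.

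\textbf{The gap.} It lies exactly at the step you flag yourself: you never show that a compactly supported closed $\alpha$ in $\overline{d\Omega^1_c}$ is exact. Closed range of $d$ on $L^2$ is not available in this generality. It fails even for flat $\C^n$, which satisfies all the hypotheses: on the Fourier side $d^*d$ acts on coexact one-forms by $|\xi|^2$, so $d$ is not bounded below on $(\ker d)^\perp$. And ``mirror the Datar--Pingali--Seshadri proof verbatim'' is a pointer, not an argument.

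\textbf{Why the obstacle is not real.} It comes from reading Poincar\'e duality on the wrong side. To kill the image of $[\alpha]$ in $H^2(X;\R)$, you do not pair with arbitrary closed $(2n-2)$-forms. On any oriented manifold, with no finiteness assumption, the de Rham pairing identifies $H^2(X;\R)$ with the dual of $H^{2n-2}_c(X;\R)$. So compactly supported closed test forms $\zeta$ suffice, and these lie in $L^2$. Write $\alpha=\lim_k d\beta_k$ in $L^2$ with $\beta_k\in\Omega^1_c$. Then
\[
\int_X\alpha\wedge\zeta=\lim_{k\to\infty}\int_X d\beta_k\wedge\zeta=\lim_{k\to\infty}\int_X d(\beta_k\wedge\zeta)=0,
\]
so $\alpha=d\gamma$ for some smooth one-form $\gamma$, with no integrability or support control needed.

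\textbf{Finishing.} Your injectivity then completes the proof. Concretely:
\begin{enumerate}
\item $\gamma$ is closed off $\operatorname{supp}\alpha$.
\item Since $H^1_\infty(X;\R)=0$, $\gamma$ equals $df$ outside a larger compact set $L$.
\item With $\chi$ vanishing near $L$ and equal to one near infinity, $\gamma-d(\chi f)$ is a compactly supported primitive of $\alpha$.
\end{enumerate}
Hence $H^2_c(X;\R)=0$, and $H^{2n-2}(X;\R)\cong H^2_c(X;\R)^*=0$. Inserting this pairing argument makes your proof coincide with the paper's.
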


\begin{proof}
Let $\beta$ be a real $L^2$-harmonic two-form. The K\"ahler identities preserve harmonicity under the type decomposition
\[
 \beta=\beta^{2,0}+\beta^{1,1}+\beta^{0,2}.
\]
For the real $(1,1)$-part, choose a unitary coframe at a point and write
\[
\beta^{1,1}=i\sum_{j=1}^n\lambda_j\theta^j\wedge\bar\theta^j,\qquad \lambda_j\in\R.
\]
The curvature contribution to its Bochner formula is a positive normalization constant times
\[
 \sum_{i,j}R_{i\bar i j\bar j}(\lambda_i-\lambda_j)^2,
\]
which is nonnegative by \eqref{eq:nqobc}.

For the $(2,0)$-part, the K\"ahler identities and $L^2$-integration by parts on the complete manifold give $\bar\partial\beta^{2,0}=0$. Thus $\beta^{2,0}$ is a holomorphic two-form. In a unitary coframe diagonalizing the Ricci tensor, with eigenvalues $r_1,\ldots,r_n\geq0$, write $\beta^{2,0}=\sum_{i<j}b_{ij}\theta^i\wedge\theta^j$. Its Bochner curvature term is a positive normalization constant times
\[
 \sum_{i<j}(r_i+r_j)|b_{ij}|^2\geq0.
\]
The $(0,2)$-part is the complex conjugate of $\beta^{2,0}$. Consequently each type component $\sigma$ satisfies
\[
 \frac12\Delta|\sigma|^2\geq|\nabla\sigma|^2.
\]
The same curvature terms appear in \cite[Proposition~6.1]{DPS26}.

For completeness, take compactly supported Lipschitz cutoffs $\chi_R$ with $\chi_R=1$ on $B(o,R)$ and $|\nabla\chi_R|\leq C/R$. Integration by parts and Cauchy--Schwarz give
\[
\int_X\chi_R^2|\nabla\sigma|^2\,dV_g\leq4\int_X|\nabla\chi_R|^2|\sigma|^2\,dV_g\leq\frac{C}{R^2}\|\sigma\|_{L^2}^2.
\]
Letting $R\to\infty$ proves that $\sigma$ is parallel. A complete noncompact manifold with $\Ric\geq0$ has infinite volume, so a parallel $L^2$ form is zero. Hence $\beta=0$.

We now prove the cohomological conclusion. Let $\alpha$ be a smooth real closed two-form with compact support. The $L^2$-Hodge decomposition on a complete manifold gives
\[
 \alpha=\beta+\lim_{k\to\infty}d\gamma_k\quad\text{in }L^2,
\]
with $\beta$ a real $L^2$-harmonic two-form and $\gamma_k$ smooth one-forms. The first part gives $\beta=0$. To show that $\alpha$ is exact in ordinary de Rham cohomology, let $\zeta$ be any smooth compactly supported closed $(2n-2)$-form. Then
\[
\int_X\alpha\wedge\zeta=\lim_{k\to\infty}\int_Xd\gamma_k\wedge\zeta=0.
\]
The limit follows from $L^2$-convergence and compact support, and each integral on the right vanishes by Stokes' theorem. Nondegeneracy of the de Rham pairing between $H^2(X;\R)$ and $H_c^{2n-2}(X;\R)$ gives $[\alpha]=0$ in ordinary cohomology. Thus $\alpha=d\gamma$ for a smooth one-form $\gamma$.

Outside the support of $\alpha$, the form $\gamma$ is closed. The hypothesis $H^1_\infty(X;\R)=0$ makes it exact outside a larger compact set $L$: there $\gamma=df$. Choose a smooth cutoff $\chi$ that vanishes on a neighborhood of $L$ and equals one outside a still larger compact set. Extending $\chi f$ by zero gives a smooth global function. The form
\[
 \gamma_c=\gamma-d(\chi f)
\]
has compact support and satisfies $d\gamma_c=\alpha$. Therefore $H_c^2(X;\R)=0$. Poincar\'e duality gives $H_{2n-2}(X;\R)=0$, and the universal coefficient theorem over $\R$ gives the asserted vanishing of $H^{2n-2}(X;\R)$.
\end{proof}

\begin{proof}[Proof of \cref{thm:main-topology}]
By \cref{thm:sci-simple}, SCI and $\Ric\geq0$ imply $\pi_1(X)=0$ and one-endedness. SCI also gives $H^1_\infty(X;\R)=H^1_\infty(X;\Z)=0$. \Cref{lem:weak-bochner} supplies $H^2(X;\R)=0$. The universal coefficient theorem, the integral end sequence, and Poincar\'e duality give successively
\[
 H^2(X;\Z)=0,\qquad H_c^2(X;\Z)=0,\qquad H_2(X;\Z)=0,
\]
exactly as in the proof of \cref{thm:refined-contractibility}. Simple connectivity gives $H_1(X;\Z)=0$; one-endedness and noncompactness give $H_3(X;\Z)=H_4(X;\Z)=0$. Higher homology vanishes by the four-dimensional CW model. Thus $X$ is integrally acyclic, and the Hurewicz--Whitehead argument used there proves contractibility.
\end{proof}

Freedman's recognition theorem now gives the following topological consequence.

\begin{corollary}\label{cor:euclidean-homeomorphism}
Let $(X,g)$ be a complete noncompact K\"ahler surface with $\Ric\geq0$ and $\NQOBC\geq0$. If $X$ is simply connected at infinity, then $X$ is homeomorphic to $\R^4$.
\end{corollary}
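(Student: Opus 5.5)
The plan is to deduce \cref{cor:euclidean-homeomorphism} from \cref{thm:main-topology} together with Freedman's recognition theorem \cite[Corollary~1.2]{Fre82}. That theorem states that a contractible topological four-manifold which is simply connected at infinity is homeomorphic to $\R^4$. So two hypotheses have to be supplied: contractibility and simple connectivity at infinity, the latter in the sense Freedman uses.

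First, \cref{thm:main-topology} applies directly, since $X$ is a complete noncompact K\"ahler surface with $\Ric\geq0$, $\NQOBC\geq0$ and $\SCI$. It gives that $X$ is contractible and has exactly one end. The underlying space is a smooth, hence topological, oriented four-manifold without boundary, and it is noncompact and second countable.

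Second, I check that \cref{def:sci} matches Freedman's hypothesis. Freedman asks that each neighborhood of infinity contain a smaller one whose loops bound in the larger one. This is exactly \cref{def:sci}, once we take complements $X\setminus K$ and $X\setminus L$. One-endedness ensures there is no ambiguity about base points or about which end is meant. The alternative formulation, where $\pi_1(U_j)\to\pi_1(U_i)$ is zero along a proper ray, was already recorded after \cref{def:sci}. Freedman's corollary then yields a homeomorphism $X\cong\R^4$.

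The only delicate point is this matching of the definition of SCI with Freedman's formulation. It is handled by the one-end conclusion of \cref{thm:main-topology} and the equivalence stated after \cref{def:sci}. No further geometric input is needed, and the smooth structure is not claimed, consistent with the remark on Gompf's exotic Stein $\R^4$'s.
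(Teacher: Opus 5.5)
Your proposal is correct and matches the paper's proof. \cref{thm:main-topology} gives contractibility, and together with SCI this meets the hypotheses of Freedman's recognition theorem \cite[Corollary~1.2]{Fre82}. Your check that one-endedness reconciles \cref{def:sci} with Freedman's formulation of simple connectivity at infinity is a harmless extra detail that the paper leaves implicit.
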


\begin{proof}
By \cref{thm:main-topology}, $X$ is contractible. In particular, $\pi_1(X)=0$ and $H_2(X;\Z)=0$. Together with SCI, these satisfy the recognition hypotheses of Freedman \cite[Corollary~1.2]{Fre82}, which imply that the open real four-manifold $X$ is homeomorphic to $\R^4$.
\end{proof}

\section{The complete \texorpdfstring{$U(n)$}{U(n)}-invariant class}\label{sec:radial}

We derive \cref{thm:radial} from the Wu--Zheng radial formulas \cite{WZ11} and Yang's nonnegative-curvature characterization \cite[Proposition~3.1]{Yang13}. Write $s=|z|^2$ and use a radial potential
\[
\omega=\frac{i}{2}\partial\bar\partial P(s),\qquad f=P'(s),\qquad h=(sf)'=f+sf'.
\]
The positive functions $f,h$ are smooth on $[0,\infty)$, with $f(0)=h(0)>0$, and
\[
g_{\alpha\bar\beta}=f(s)\delta_{\alpha\beta}+f'(s)\bar z_\alpha z_\beta.
\]
At $z\ne0$ the eigenvalue in the complex radial direction is $h(s)$, and the other $n-1$ eigenvalues are $f(s)$.

\subsection{Curvature and the metric lower bound}

Set $\xi(s)=-sh'(s)/h(s)$. The radial characterization gives
\begin{equation}\label{eq:xi-conditions}
 \xi(0)=0,\qquad \xi'\geq0\qquad\textrm{and}\qquad\xi\leq1.
\end{equation}
See \cite[Section~2 and Proposition~3.1]{Yang13} for the extension of the Wu--Zheng conditions to nonnegative bisectional curvature. For completeness, we give a direct proof of the inequalities used below. The radial holomorphic curvature component is $A=\xi'/h$, so $\BK\geq0$ implies $\xi'\geq0$ and hence $\xi\geq0$. If $\xi(s_0)>1$, monotonicity gives $\xi\geq1+\delta$ for $s\geq s_0$, for some $\delta>0$. Integrating $h'/h=-\xi/s$ gives $h(s)\leq Cs^{-1-\delta}$. Then the radial length to infinity $\frac12\int_{s_0}^\infty\sqrt{h(s)/s}\,ds$ is finite, contrary to completeness. Thus \eqref{eq:xi-conditions} holds without a bounded curvature hypothesis.

It follows that $h'\leq0$ and $(sh)'=h(1-\xi)\geq0$. Let $\kappa=h(1)>0$. For $s\geq1$, $sh(s)\geq\kappa$; for $s\leq1$, $h(s)\geq\kappa$. Since
\[
 f(s)=\frac1s\int_0^s h(t)\,dt\geq h(s),
\]
with the limiting value at $s=0$, we obtain
\[
g\geq\frac{\kappa}{1+s}g_{\mathrm{Eucl}}\qquad\textrm{and}\qquad h(s)\leq f(s)\leq h(0).
\]
This compares the prescribed metric with the Euclidean metric in the given holomorphic coordinates.

The function $\rho=\log(1+s)$ is smooth, proper, and strictly plurisubharmonic, since its Euclidean complex Hessian has eigenvalues $(1+s)^{-2}$ and $(1+s)^{-1}$. Moreover,
\[
 |\nabla\rho|_g^2=\frac{4s}{(1+s)^2h(s)}\leq\frac4\kappa.
\]
Thus $(\C^n,g)$ is strongly Stein.

\subsection{The weighted space and its polynomial kernel}

The metric volume satisfies
\begin{equation}\label{eq:volume-density}
dV_g=f^{n-1}h\,dV_{\mathrm{Eucl}}\qquad\textrm{and}\qquad\frac{\kappa^n}{(1+s)^n}\leq f^{n-1}h\leq h(0)^n.
\end{equation}
For a multi-index $\alpha$ of total degree $d$, the squared weighted norm of $z^\alpha$ is a positive angular constant times
\[
 M_d=\int_0^\infty s^{d+n-1}(1+s)^{-q}f(s)^{n-1}h(s)\,ds.
\]
Take $q=n+2$. By the upper bound in \eqref{eq:volume-density}, all linear monomials are integrable: the integrand at infinity is at most a constant times $s^n(1+s)^{-n-2}=O(s^{-2})$. By the lower bound, for $s\geq1$ it is at least a positive constant times $s^{d-q-1}$. Thus $M_d=\infty$ when $d\geq q$.

The radial measure is invariant under coordinatewise rotations, so distinct monomials are orthogonal. Applying Parseval's identity on the angular variables and Tonelli's theorem to the Taylor series of an entire function expresses its squared norm as the sum of the squared coefficients times the squared monomial norms. An entire function in the weighted $L^2$ space therefore has no Taylor terms of nonintegrable degree. Integrability at a degree implies integrability at each lower degree, since the origin causes no divergence and $s^{e}\leq s^d$ for $s\geq1$ and $e\leq d$. Hence for some $1\leq d_*\leq q-1$,
\[
\Hcal_q=\bigoplus_{d=1}^{d_*}\Pcal_d\qquad\textrm{and}\qquad\dim\Hcal_q=\binom{n+d_*}{n}-1,
\]
where $\Pcal_d$ is the full space of homogeneous holomorphic polynomials of degree $d$. The condition $F(0)=0$ excludes the constant term.

For an orthonormal basis $s_1,\ldots,s_N$ of $\Hcal_q$, define
\[
K_q(z)=\sum_{\nu=1}^N|s_\nu(z)|^2,\qquad\Dcal_q(z)=\sum_{\nu=1}^N|\nabla s_\nu(z)|_g^2.
\]
Both quantities are independent of the choice of orthonormal basis. The diagonal kernel on each degree is $U(n)$-invariant and homogeneous of bidegree $(d,d)$, and hence is $c_d|z|^{2d}$ for some $c_d>0$. Consequently
\[
 K_q(z)=K(s)=\sum_{d=1}^{d_*}c_ds^d.
\]
In particular $K(0)=0$, $K'(s)>0$, and $K(s)\to\infty$. Let $S=(s_1,\ldots,s_N)$. Since $\Hcal_q$ contains all linear polynomials, $dS$ is injective at every point. For a $(1,0)$-vector $v$, direct differentiation and Cauchy--Schwarz give
\[
(\partial\bar\partial\log(1+|S|^2))(v,\bar v)=\frac{(1+|S|^2)|dS(v)|^2-|\langle dS(v),S\rangle|^2}{(1+|S|^2)^2}\geq\frac{|dS(v)|^2}{(1+|S|^2)^2}.
\]
Consequently $u=q^{-1}\log(1+K_q)$ is strictly plurisubharmonic. It is also smooth and proper.

\subsection{The global derivative estimate and annular properness}
\label{sec:radial-estimates}

We specify the notation for the two estimates in \cref{thm:radial}. Given radii $0<R_1<R_2<\cdots\to\infty$, set $A_j=\{z:R_j\leq d_g(0,z)\leq R_{j+1}\}$. For kernels $K_j$, their derivative sums $\Dcal_j$, exponents $q_j\geq1$, and constants $a_j>0$, the estimates are
\begin{align}
\frac1{q_j}\log(1+a_jK_j(z))&\geq j&&(z\in A_j),\tag{P}\label{eq:P}\\
a_j\Dcal_j(z)&\leq C_0q_j^2(1+a_jK_j(z))&&(z\in\C^n),\tag{J}\label{eq:J}
\end{align}
where $C_0<\infty$ is independent of $j$ and $z$. We will take the same space and kernel at every stage: $q_j=q=n+2$, $K_j=K_q$, $\Dcal_j=\Dcal_q$, and $a_j=1$.

For holomorphic basis functions, $\partial_\alpha\partial_{\bar\beta}K_q=\sum_\nu(\partial_\alpha s_\nu)\overline{\partial_\beta s_\nu}$. The complex Hessian of $K(s)$ has eigenvalues $K'+sK''$ and $K'$. With our complex-valued gradient convention this gives
\[
\Dcal_q(s)=2\left\{\frac{K'(s)+sK''(s)}{h(s)}+(n-1)\frac{K'(s)}{f(s)}\right\}.
\]
For $s\geq1$, $f,h\geq\kappa/s$, and hence
\begin{align}
\Dcal_q(s)\leq\frac2\kappa\{s^2K''(s)+nsK'(s)\}\notag=\frac2\kappa\sum_{d=1}^{d_*}d(d+n-1)c_ds^d\notag\leq\frac{2d_*(d_*+n-1)}\kappa K(s).
\end{align}
For $0\leq s\leq1$, $f,h\geq\kappa$, so
\[
\Dcal_q(s)\leq\frac2\kappa\sum_{d=1}^{d_*}d(d+n-1)c_d=:B_g<\infty.
\]
It follows, for example with $C_g=q^{-2}\max\{B_g,2d_*(d_*+n-1)/\kappa\}$, that
\[
 \Dcal_q(z)\leq C_gq^2(1+K_q(z))\qquad(z\in\C^n).
\]
This proves \textup{(J)} on all of $\C^n$, including the origin, with a constant depending only on the metric and uniform over all annuli. Moreover, $|\nabla K_q|_g\leq2\sqrt{K_q\Dcal_q}$ gives
\[
|\nabla u|_g\leq\frac{2\sqrt{K_q\Dcal_q}}{q(1+K_q)}\leq2\sqrt{C_g}.
\]
Thus the logarithmic kernel exhaustion also has bounded gradient.

The radial distance from the origin is
\[
 R(s)=d_g(0,z)=\frac12\int_0^s\sqrt{\frac{h(t)}t}\,dt.
\]
Indeed, the length of any path is bounded below by the integral of its radial speed, and a radial segment attains this bound. Completeness implies $R(s)\to\infty$. The function
\[
u(s)=q^{-1}\log(1+K(s))
\]
is strictly increasing from zero to infinity. Define $s_j$ by $u(s_j)=j$ and set $R_j=R(s_j)$ for $j\geq1$. On $A_j=\{R_j\leq R\leq R_{j+1}\}$,
\[
 \frac1q\log(1+K_q(z))\geq j.
\]
Thus \textup{(P)} and \textup{(J)} hold with the same choices $K_j=K_q$, $q_j=n+2$, and $a_j=1$ at every stage. This completes the proof of \cref{thm:radial}.

\section{Consequences for uniformization}
\label{sec:uniformization}

We now prove the two uniformization corollaries. The first combines our topological theorem with the conditional result of Datar--Pingali--Seshadri. The second uses Ni--Tam's Busemann construction to obtain strong Steinness under an additional sectional-curvature hypothesis.

\begin{proof}[Proof of \cref{cor:conditional-uniformization}]
Nonnegative bisectional curvature implies $\Ric\geq0$ and $\NQOBC\geq0$, so \cref{thm:main-topology} makes $X$ contractible. By hypothesis it is also strongly Stein and simply connected at infinity. The conditional uniformization theorem \cite[Theorem~2]{DPS26} therefore gives a biholomorphism $X\cong\C^2$.
\end{proof}

For a point $o\in X$ and a unit-speed minimizing ray $\gamma$ from $o$, write
\[
b_\gamma(x)=\lim_{t\to\infty}\bigl(t-d_g(x,\gamma(t))\bigr),\qquad B(x)=\sup_{\gamma(0)=o}b_\gamma(x).
\]
The function $B$ is $1$-Lipschitz. Under nonnegative holomorphic bisectional curvature it is plurisubharmonic by Wu's construction \cite{Wu79}, as used in \cite[Section~4]{NT03}.
Let $H(x,y,\tau)$ denote the heat kernel of $g$, and write
\[
 P_\tau F(x)=\int_X H(x,y,\tau)F(y)\,dV_g(y).
\]

\begin{proof}[Proof of \cref{cor:sectional-infinity}]
As recalled in the proof of \cite[Theorem~4.2]{NT03}, nonnegative sectional curvature outside a compact set makes $B$ an exhaustion. The positive bisectional curvature hypothesis and \cite[Lemma~4.1 and the proof of Theorem~4.2(ii)]{NT03} give, for sufficiently small $\tau>0$,
\[
 u=P_\tau B,\qquad \ddc u>0,\qquad |\nabla u|_g\leq1,
\]
with $u$ smooth. Since $\Ric\geq0$, the heat kernel has total mass one. The $1$-Lipschitz bound and the heat-kernel first-moment estimate \cite[Lemma~2.12]{DPS26} imply
\[
|u(x)-B(x)|\leq\int_X H(x,y,\tau)d_g(x,y)\,dV_g(y)\leq C_n\sqrt\tau.
\]
Thus $u$ remains an exhaustion, and $(X,g)$ is strongly Stein. \Cref{cor:conditional-uniformization} now yields $X\cong\C^2$ biholomorphically.
\end{proof}

The sectional-curvature hypothesis enters through properness of the Busemann function. The uniform heat-kernel error bound then preserves properness under regularization. Obtaining strong Steinness from $\BK>0$ and SCI alone remains unresolved.

\section{The case without SCI and the cohomological reduction}
\label{sec:cohomological-proof}

We conclude by proving \cref{thm:cohomological-reduction} from the results cited in the introduction. The argument describes what $\BK>0$ implies without SCI and clarifies the distinction between ordinary Steinness and the bounded-gradient requirement.

\begin{proof}[Proof of \cref{thm:cohomological-reduction}]
Tracing $\BK>0$ gives $\Ric>0$. If $X$ had at least two ends, the standard construction of a line and the Cheeger--Gromoll splitting theorem \cite{CG71} would give a parallel line direction on which the Ricci tensor vanishes, a contradiction. Thus $X$ has one end.
\cite[Theorem~8]{Sor01} gives the geodesic loops-to-infinity property. This proves part~(1).

By \cite[Theorem~4.2(ii)]{NT03}, $X$ carries a smooth strictly plurisubharmonic function $\psi_0$ with bounded gradient. If there were a compact irreducible analytic curve, restriction to its compact normalization would make $\psi_0$ subharmonic and therefore constant. At a regular point of the curve its Levi form is strictly positive, a contradiction. Every compact positive-dimensional analytic subset of a noncompact complex surface contains such a curve. This proves part~(2).

The smallest Levi eigenvalue of $\psi_0$ has a positive continuous minorant. For each prescribed pair of distinct points, apply \cite[Proposition~2.2(2)]{DPS26} with a sufficiently large multiple $q\psi_0$, where $q$ may depend on that pair, to obtain a global holomorphic function separating the two points. Similarly, for each $x\in X$, \cite[Proposition~2.2(3)]{DPS26}, with $q$ chosen for $x$, gives two global holomorphic functions whose differentials are linearly independent at $x$. They form local coordinates near $x$ by the holomorphic inverse function theorem. This proves part~(3).

The one-ended complete K\"ahler manifold $X$ admits a smooth plurisubharmonic function that is strictly plurisubharmonic on every complex two-dimensional germ. The Bochner--Hartogs theorem of Napier--Ramachandran \cite[Theorem~0.2]{NR18} therefore gives
\[
 H_c^1(X,\OO_X)=0.
\]
This is part~(4).

By part~(3) and \cref{def:holomorphic-properties}, $X$ is holomorphically spreadable. Finally, V\^aj\^aitu's cohomological Stein criterion \cite[Proposition~1]{Vaj10} states that a holomorphically spreadable complex space of pure dimension $n$ is Stein when
\[
\dim_\C H^q(X,\OO_X)<\infty,\qquad 1\leq q\leq n-1.
\]
For the surface $X$, only $q=1$ occurs. This proves part~(5).
\end{proof}

\section*{Acknowledgments}

The author thanks Prof. Jixiang Fu for helpful discussions and valuable suggestions. ChatGPT-6 assisted in developing the proofs. The author has carefully verified all arguments.

\noindent\begin{minipage}{\textwidth}
\bigskip

\noindent\begin{minipage}{\textwidth}
\textsc{Jingcao Wu}\\
School of Mathematics\\
Shanghai University of Finance and Economics\\
Shanghai 200433, People's Republic of China\\
Email:
\href{mailto:wujincao@shufe.edu.cn}
{\texttt{wujincao@shufe.edu.cn}}
\end{minipage}

\end{minipage}

\end{document}